\newcommand\cyr{%
\renewcommand\rmdefault{wncyr}%
\renewcommand\sfdefault{wncyss}%
\renewcommand\encodingdefault{OT2}%
\normalfont
\selectfont}
\DeclareTextFontCommand{\textcyr}{\cyr}
\DeclareFontFamily{OT1}{rsfs}{}
\DeclareFontShape{OT1}{rsfs}{n}{it}{<-> rsfs10}{}
\DeclareMathAlphabet{\mathscr}{OT1}{rsfs}{n}{it}
\numberwithin{equation}{section}
\newtheorem{theorem}{Theorem}[section]
\newtheorem{lemma}[theorem]{Lemma}
\newtheorem{proposition}[theorem]{Proposition}
\newtheorem{corollary}[theorem]{Corollary}
\newtheorem{question}{Question}
\theoremstyle{definition}
\newtheorem{definition}[theorem]{Definition}
\newtheorem{remark}[theorem]{Remark}
\theoremstyle{remark}
\newtheorem{example}[theorem]{Example}
\newtheorem{acknowledgement}{Acknowledgement}
\begin{document}
\title[Local cohomology of polynomial and formal power series rings]{Some results on local cohomology of polynomial and formal power series rings: the one dimensional case}

\author[Pham Hung Quy]{Pham Hung Quy}
\address{Department of Mathematics, FPT University, Hoa Lac Hi-Tech Park, Ha Noi, Viet Nam}
\email{quyph@fpt.edu.vn}

\thanks{2010 {\em Mathematics Subject Classification\/}: 13D45; 13N10; 13E99; 13B25; 13J05.\\
This work is partially supported by a fund of Vietnam National Foundation for Science
and Technology Development (NAFOSTED) under grant number
101.04-2014.25.}

\keywords{Local cohomology; $D$-module; Associated prime ideal.}


\begin{abstract}
In this paper, we prove several results on the finiteness of local cohomology of polynomial and formal power series rings. In particular, we give a partial affirmative answer for a question of L. N\'{u}\~{n}ez-Betancourt in [J. Algebra 399 (2014), 770--781].
\end{abstract}

\maketitle


\section{Introduction}
The motivation of this paper is the following conjecture of G. Lyubeznik: If $R$ is a regular ring, then each local cohomology module $H^i_I(R)$ has finitely many associated prime ideals. The Lyubeznik conjecture has affirmative answers in several cases: for regular rings of prime characteristic (cf. \cite{HS93,L97}); for regular local and affine rings of characteristic zero (cf. \cite{L93}); for unramified regular local rings of mixed characteristic (cf. \cite{L00b,N13b}) and for smooth $\mathbb{Z}$-algebras (cf. \cite{BBL14}). The method of the proof of these results is considering the module structure of local cohomology over non-commutative rings, $D$-modules (resp. $F$-modules). The finiteness of these module structures (for example, finite length) yields the finiteness of $\mathrm{Ass}_S H^i_I(R)$.

Motivated by the above finiteness results, M. Hochster raised the following related
question (cf. \cite[Question 1.1]{N14}):
\begin{question}\label{Q1.1}\rm Let $(R, \frak m, k)$ be a local ring and $S$ a flat extension of $R$ with regular closed fiber. Then is
$$\mathrm{Ass}_S H^0_{\frak mS}(H^i_I(S)) = V(\frak mS) \cap \mathrm{Ass}_SH^i_I(S)$$
finite for every ideal $I \subset S$ and for every integer $i \ge 0$?
\end{question}
Suppose $S$ is a flat extension of $R$ with regular fibers. It is worth to note that if Question \ref{Q1.1} has an affirmative answer, then the finiteness conditions of $\mathrm{Ass}_SH^i_I(S)$ and $\mathrm{Ass}_RH^i_I(S)$ are equivalent. In \cite{N14}, L. N\'{u}\~{n}ez-Betancourt gave a positive answer for Question \ref{Q1.1} when $S$ is either $R[x_1, ...,x_n]$ or $R[[x_1, ...,x_n]]$ and $\dim R/(I \cap R) \le 1$. In that paper, he introduced the notion of $\Sigma$-finite $D$-modules. It should be noted that $\Sigma$-finite $D$-modules maybe not have finite length but they have finitely many associated primes. N\'{u}\~{n}ez-Betancourt asked the following question (cf. \cite[Question 5.1]{N14}).
\begin{question}\label{Q1.2}\rm Let $(R, \frak m, k)$ be a local ring and $S$ either $R[x_1, ...,x_n]$ or $R[[x_1, ...,x_n]]$. Then is $H^i_{\frak m}H^j_J(S)$ $\Sigma$-finite for every ideal $J \subset S$ and $i, j \ge 0$?
\end{question}
Throughout this paper, let $R$ be a commutative Noetherian ring and $S$ be either $R[X_1,..., X_n]$ or $R[[X_1, ..., X_n]]$. In Section 3 we modify the definition of $\Sigma$-finite $D$-modules for rings that not necessarily local rings. We prove that $H^j_J(S)$ is $\Sigma$-finite for every ideal $J \subseteq S$ satisfying $\dim R/(J \cap R) = 0$ (cf. Proposition \ref{P3.7}). Applying this result we give a positive answer for Question \ref{Q1.2} when $\dim R/(J \cap R) \le 1$ (cf. Theorem \ref{T3.8}). Moreover, a finiteness result of associated primes of local cohomology is given (cf. Corollary \ref{C3.9}).

In Section 4 we consider the following problem.
\begin{question}\label{Q1.3}\rm Suppose that $\dim R = 1$ and $S$ is either $R[X_1,..., X_n]$ or $R[[X_1, ..., X_n]]$. Is it true that $H^i_J(S)$ has only finitely many associated primes for all ideals $J$ of $S$ and all $i \ge 0$?
\end{question}
By the work of B. Bhatt et al. \cite{BBL14} Question \ref{Q1.3} has a positive answer when $S = \mathbb{Z}[x_1, ...,x_n]$. The next interesting case of Lyubeznik's conjecture is seem to be the case $S = R[x_1,...,x_n]$ with $R$ is a Dedekind domain (containing the field of rational numbers). This is a special case of Question \ref{Q1.3}. In this section we will give a partial affirmative answer of Question \ref{Q1.3} in the case $R$ contains a field of positive characteristic (cf. Proposition \ref{P4.4}). It should be noted that H. Dao and the author showed that local cohomology of Stanley-Reisner rings over a field of positive characteristic have only finitely many associated primes, see \cite{DQ16} for a more general result (see also \cite{HN16}). Finally, the readers are encouraged to \cite{R14, R16} for some results about the finiteness of associated primes of local cohomology of polynomial and power series rings over a normal domain containing a field of zero characteristic.

\section{Preliminary}
In this section we collect some basic facts on rings of differential operators and $D$-modules. Let $R$ be a Noetherian ring and $S = R[X_1, ..., X_n]$ or $S = R[[X_1, ..., X_n]]$.

\noindent {\bf Rings of differential operators.} Let $D(S, R)$ (or $D$ if there is no confusion) be the ring of $R$-linear differential operators of $S$. The ring $D(S,R)$ is defined by recursion as follows. The differential operators of order zero are
the morphisms induced by multiplying by elements in $S$. An element $\delta \in \mathrm{Hom}_R(S,S)$ is a differential operator of order less than of equal to $k+1$ if $[\delta, s]: = \delta \circ s - s \circ \delta$ is a differential operator of order less than or equal to $k$ for every $s \in S = \mathrm{Hom}_S(S,S)$. Notice that $D(S, R)$ is not a commutative ring, but $R$ is contained in the center of $D(S, R)$. In our cases $S = R[X_1, ..., X_n]$ or $S = R[[X_1, ..., X_n]]$, it is well known that (see \cite[Theorem 16.12.1]{G67})
$$D(S, R) = S\big[ \frac{1}{t!} \frac{\partial^t}{\partial x_i^t} | t \in \mathbb{N}, 1 \le i \le n \big] \subseteq \mathrm{Hom}_R(S, S).$$
{\bf Homomorphic.} Let $R'$ be another ring with $\phi : R \to R'$ a homomorphism of rings. Let $S'$ be either $R' [x_1, ..., x_n]$ or $R'[[x_1, ..., x_n]]$, respectively. Then $\phi$ induces a homomorphism between rings of differential operators $\Phi : D(S, R) \to D(S', R')$. In particular, we have a natural surjection $D(S, R) \to D(S/IS , R/I)$ for every ideal $I \subset R$.
\begin{example}[of $D$-modules] \label{E2.1}\rm
\begin{enumerate}[{(i)}]
\item It is well known that $S$ is a $D$-module.
\item Let $M$ be an $R$-module. Then $M[x_1, ..., x_n] \cong R[x_1, ..., x_n] \otimes_R M $ (resp. $R[[x_1, ..., x_n]] \otimes_R M$ and $M[[x_1, ..., x_n]]$) are $D$-modules. In particular for each $\frak m \in \mathrm{Max}(R)$ we have $(R/\frak m)[x_1, ..., x_n]$ (resp. $(R/\frak m)[[x_1, ..., x_n]]$) are $D$-modules of finite length.
\item If $M$ is a $D$-module then its localization and local cohomology of $M$ are $D$-modules.
\item In \cite{L00}, Lyubeznik defined the subcategory of the category of $D(S,R)$-modules, says $C(S, R)$, is the smallest subcategory of $D(S,R)$-modules that contains $S_f$
for all $f \in S$ and that is closed under taking submodules, quotients and extensions. In particular, the kernel, image and cokernel of a morphism of
$D(S,R)$-modules that belongs to $C(S,R)$ are also objects in $C(S,R)$. Notice that $H^{i_k}_{I_k} \cdots H^{i_1}_{I_1}(S)$ is an object in $C(S,R)$. The critical fact for the study of the finiteness of local cohomology is that every module in $C(S, R)$ has finite length as a $D$-module provided $R$ is a field (see \cite[Corollary 6]{L00}).
\end{enumerate}

\end{example}


\section{$\Sigma$-finite $D$-modules}
First, we give the definition of $\Sigma$-finite $D$-modules. Notice that we do not assume $R$ is local as \cite{N14}. Let $M$ be a $D$-module, we denote by $\mathrm{Fin}(M)$ the set of all $D$-submodules of $M$ that have finite length. Let $N$ be a $D$-module of finite length. There is a filtration of submodules $0 = N_0 \subset N_1 \subset \cdots \subset N_h = N$ such that $N_i/N_{i-1}$ is a nonzero simple $D$-module for all $i = 1, ...,h$. The factors, $N_i/N_{i-1}$, are the same, up to permutation and isomorphism, for every filtration. We denote that set of factors by $\mathcal{C}(N)$.

\begin{definition}\label{D3.1}\rm Let $M$ be a $D$-module such that $\mathrm{Supp}_R(M) \subseteq \mathrm{Max}(R)$. We say that $M$ is {\it $\Sigma$-finite} if
\begin{enumerate}[{(i)}]
\item $\bigcup_{N \in \mathrm{Fin}(M)}N = M$,
\item $\bigcup_{N \in \mathrm{Fin}(M)} \mathcal{C}(N)$ is finite, and
\item For every $N \in \mathrm{Fin}(M)$ and $L \in \mathcal{C}(N)$, $L \in C(S/\frak mS, R/\frak m)$ for some $\frak m \in \mathrm{Max}(M)$.
\end{enumerate}
 \end{definition}
If $M$ is $\Sigma$-finite, we denote $\mathcal{C}(M):=\cup_{N \in \mathrm{Fin}(M)} \mathcal{C}(N)$. It is easy to see that if
$$0 \to M' \to M \to M'' \to 0$$
is a short exact sequence of $\Sigma$-finite $D$-modules, then $\mathcal{C}(M) = \mathcal{C}(M') \cup \mathcal{C}(M'')$.

 \begin{remark}\label{R3.2}\rm If $M$ is $\Sigma$-finite then $\mathrm{Supp}_R(M)$ is a finite subset of $\mathrm{Max}(R)$. If $\mathrm{Supp}_R(M) = \{\frak m_1, ..., \frak m_r\} \subseteq \mathrm{Max}(R)$, then $M \cong \Gamma_{\frak m_1}(M) \oplus \cdots \oplus \Gamma_{\frak m_r}(M)$. Therefore all results proved in Section 3 of \cite{N14} (in the case $R$ is a local ring) can be extended for our notion of $\Sigma$-finite. For example, if $M$ is a $\Sigma$-finite $D$-module, then $H^i_J(M)$ is also a $\Sigma$-finite $D$-module for every ideal $J \subset S$ and integer $i \ge 0$.
 \end{remark}
The following give us examples of $\Sigma$-finite $D$-modules.
 \begin{lemma}\label{L3.3} Let $A$ be an Artinian $R$-module. Then $M = A \otimes_R R[x_1, ..., x_n]$ (resp. $M = A \otimes_R R[[x_1, ..., x_n]]$) is a $\Sigma$-finite $D$-module.
 \end{lemma}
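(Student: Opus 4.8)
The plan is to prove that $M = A \otimes_R S$ is $\Sigma$-finite by reducing to the Artinian structure of $A$. First I would recall that since $A$ is an Artinian $R$-module, its support is a finite subset of $\mathrm{Max}(R)$, say $\{\frak m_1, \dots, \frak m_r\}$, and $A$ decomposes as $A \cong \Gamma_{\frak m_1}(A) \oplus \cdots \oplus \Gamma_{\frak m_r}(A)$. Tensoring with the flat $R$-algebra $S$ commutes with this finite direct sum, so $M \cong \bigoplus_{i} \Gamma_{\frak m_i}(A) \otimes_R S$, and it suffices to treat a single Artinian module $A$ supported at one maximal ideal $\frak m$. This immediately gives $\mathrm{Supp}_R(M) \subseteq \mathrm{Max}(R)$, which is needed even to speak of $\Sigma$-finiteness, and reduces the whole problem to the local-at-$\frak m$ situation.

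Next, for $A$ supported at a single $\frak m$, I would express $A$ as a union (directed union) of its finite-length $R$-submodules $A_\lambda$; since $A$ is Artinian and $\frak m$-power-torsion, each $A_\lambda$ is annihilated by some power $\frak m^{t}$ and has a finite composition series whose factors are copies of $k = R/\frak m$. The key observation is that tensoring is compatible with these filtrations: each $A_\lambda \otimes_R S$ is a $D$-submodule of $M$ (using that the $D$-module structure on $M$ comes from the $D$-module structure on $S$, with $A$ playing the role of a coefficient module as in Example \ref{E2.1}(ii)), and because $A_\lambda$ has a filtration with factors $k$, the module $A_\lambda \otimes_R S$ inherits a filtration with factors $k \otimes_R S = (R/\frak m)[x_1,\dots,x_n]$ (resp. the power series version). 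By Example \ref{E2.1}(ii) the latter is a $D$-module of finite length. Hence each $A_\lambda \otimes_R S$ has finite length as a $D$-module, so $A_\lambda \otimes_R S \in \mathrm{Fin}(M)$, and their union is all of $M$ by right-exactness of the tensor product along the directed system; this verifies condition (i) of Definition \ref{D3.1}.

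For conditions (ii) and (iii) I would analyze the simple composition factors. Every $N \in \mathrm{Fin}(M)$ is contained in some $A_\lambda \otimes_R S$ for $\lambda$ large enough (since $N$ has finite length and $M$ is the directed union), so $\mathcal{C}(N) \subseteq \mathcal{C}(A_\lambda \otimes_R S)$. The composition factors of $A_\lambda \otimes_R S$ all arise from the finitely many composition factors of $(R/\frak m_i)[x_1,\dots,x_n]$ as $\frak m_i$ ranges over the finite support, and each such factor lies in $C(S/\frak m_i S, R/\frak m_i)$ because $(R/\frak m_i)[x_1,\dots,x_n] = S/\frak m_i S$ is itself an object of that category (it equals the base ring of $C(S/\frak m_i S, R/\frak m_i)$, namely $S_{1}$ localized trivially). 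Since $(R/\frak m_i)$ is a field, $C(S/\frak m_i S, R/\frak m_i)$ consists of finite-length $D$-modules by Example \ref{E2.1}(iv), so there are only finitely many isomorphism classes of simple factors over all $\frak m_i$; this gives the finiteness in (ii) and the containment in (iii) simultaneously.

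The main obstacle I anticipate is making the filtration-compatibility argument precise at the level of $D$-modules rather than merely $R$-modules: one must check that a short exact sequence $0 \to k \to A_\lambda \to A_\lambda/k \to 0$ of $R$-modules, after applying the exact functor $- \otimes_R S$, yields a short exact sequence of $D(S,R)$-modules with the correct $D$-action, and that the resulting factor $k \otimes_R S$ is genuinely isomorphic as a $D$-module to $(R/\frak m)[x_1,\dots,x_n]$ with its standard differential-operator action (not just as an $S$-module). This hinges on the surjection $D(S,R) \to D(S/\frak m S, R/\frak m)$ from the ``Homomorphic'' paragraph of Section 2, which guarantees that the $D$-action on the $k$-coefficient piece factors correctly through the differential operators of the fiber. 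Once this compatibility is established, the three conditions follow as outlined.
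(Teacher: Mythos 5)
Your proof is correct and follows essentially the same route as the paper's: observe that $\mathrm{Supp}_R(M)=\mathrm{Supp}_R(A)$ is a finite subset of $\mathrm{Max}(R)$, write the Artinian module $A$ as the directed union of its finite-length submodules, and use that $L\otimes_R S$ is a $D$-module of finite length whenever $L$ has finite length over $R$ (via a composition series with factors $R/\frak m$ and Example \ref{E2.1}). The paper's proof is just a terse version of yours; your extra verifications of conditions (ii) and (iii) of Definition \ref{D3.1}, and of the $D$-module compatibility of the filtrations, are exactly the details the paper leaves to the reader.
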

 \begin{proof}
   It is easy to see that $\mathrm{Supp}_R(M) = \mathrm{Supp}_R(A)$ is a finite subset of $\mathrm{Max}(R)$. Since $A$ is Artinian, it is union of all submodules of finite length. Moreover if $L$ is an $R$-module of finite length, then $L \otimes_R S$ is a $D$-module of finite length. The assertion now follows.
 \end{proof}
 \begin{remark}\rm Suppose that $S = R[[x_1, ..., x_n]]$. In general $A \otimes_R R[[x_1, ..., x_n]] \ncong A[[x_1, ..., x_n]]$ and $A[[x_1, ..., x_n]]$ may not be $\Sigma$-finite. For example, let $R = k[t]$, where $k$ is a field and $t$ an indeterminate. Let $A = E_R(k)$ be the injective hull of $k$. Then $A \cong k[t^{-1}]$. Choose the element $a = \sum_{i = 0}^\infty t^{-i}x_1^i \in S$ we have $\mathrm{Ann}_R(a) = 0 \notin \mathrm{Max}(R)$.
 \end{remark}

\begin{lemma}\label{L3.5} Let $I$ be an ideal of $R$ such that $\dim R/I = 0$. Then $H^i_{IS}(S)$ is a $\Sigma$-finite $D$-module.
\end{lemma}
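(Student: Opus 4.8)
The plan is to reduce the statement to Lemma \ref{L3.3} by identifying $H^i_{IS}(S)$ with $A \otimes_R S$ for a suitable Artinian $R$-module $A$. Concretely, I would set $A := H^i_I(R)$ and establish two facts: that $A$ is an Artinian $R$-module, and that there is a $D$-linear isomorphism $H^i_{IS}(S) \cong H^i_I(R) \otimes_R S$. Granting both, Lemma \ref{L3.3} immediately shows that $A \otimes_R S$ is a $\Sigma$-finite $D$-module, and hence so is $H^i_{IS}(S)$.

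First I would show that $A = H^i_I(R)$ is Artinian. Since $\dim R/I = 0$ and $R$ is Noetherian, $R/I$ is Artinian, so $V(I) = \{\frak m_1, \dots, \frak m_r\}$ is a finite set of maximal ideals. Every element of the $I$-torsion module $H^i_I(R)$ is annihilated by some power $I^k$, and $R/I^k$ is Artinian with $R/I^k \cong \prod_{j=1}^r (R/I^k)_{\frak m_j}$; this produces a direct sum decomposition $H^i_I(R) \cong \bigoplus_{j=1}^r \Gamma_{\frak m_j}(H^i_I(R))$, where each summand $\Gamma_{\frak m_j}(H^i_I(R))$ is $\frak m_j$-power torsion and therefore isomorphic to $H^i_I(R)_{\frak m_j}$. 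Because local cohomology commutes with localization and $I R_{\frak m_j}$ is $\frak m_j R_{\frak m_j}$-primary, one has $H^i_I(R)_{\frak m_j} \cong H^i_{\frak m_j R_{\frak m_j}}(R_{\frak m_j})$, which is Artinian over $R_{\frak m_j}$ by Grothendieck's theorem on the Artinianness of local cohomology with respect to the maximal ideal of a Noetherian local ring. A finite direct sum of modules Artinian over the various $R_{\frak m_j}$ is Artinian over $R$, so $A$ is Artinian.

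Next I would produce the isomorphism $H^i_{IS}(S) \cong H^i_I(R) \otimes_R S$ and check that it respects the $D$-module structures. Choosing generators $I = (f_1, \dots, f_s)$ with all $f_\ell \in R \subseteq S$, the \v{C}ech complex $C^\bullet(\underline f; S)$ computing $H^\bullet_{IS}(S)$ is built from the localizations $S_{f_J}$; since each $f_J$ lies in $R$ and $S$ is flat over $R$, one has $S_{f_J} \cong R_{f_J} \otimes_R S$ and $C^\bullet(\underline f; S) \cong C^\bullet(\underline f; R) \otimes_R S$. Flatness of $S$ over $R$ then gives $H^i_{IS}(S) \cong H^i_I(R) \otimes_R S$. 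Crucially, on each term $R_{f_J} \otimes_R S$ the operators $\frac{1}{t!}\frac{\partial^t}{\partial x_i^t}$ act only on the second factor (the $f_J$ being constants with respect to the $x_i$), so the \v{C}ech differentials and the $D$-action are all of the form $(-) \otimes_R S$; thus the isomorphism is one of $D$-modules, with the right-hand side carrying exactly the structure of Example \ref{E2.1}(ii) used in Lemma \ref{L3.3}.

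I expect the main obstacle to be precisely this last compatibility: one must ensure that the flat base change isomorphism is genuinely $D$-linear, and (in the case $S = R[[X_1, \dots, X_n]]$) that the object appearing is the honest tensor product $H^i_I(R) \otimes_R S$ rather than a completed version. Indeed, the warning following Lemma \ref{L3.3} shows that $A \otimes_R S$ and $A[[X_1, \dots, X_n]]$ differ in general and that only the former is $\Sigma$-finite, so routing everything through the \v{C}ech complex, where the tensor product with $S$ is manifest, is what makes the argument go through. By contrast, the Artinianness of $H^i_I(R)$ is comparatively routine once the reduction to the local case is in place.
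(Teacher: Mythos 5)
Your proposal is correct and follows essentially the same route as the paper: both reduce to Lemma \ref{L3.3} via the flat base change isomorphism $H^i_{IS}(S) \cong H^i_I(R) \otimes_R S$ after showing that $H^i_I(R)$ is Artinian. The only (immaterial) difference is in that Artinianness step, where the paper decomposes $H^i_I(R) \cong H^i_{\frak m_1}(R) \oplus \cdots \oplus H^i_{\frak m_r}(R)$ by Mayer--Vietoris and cites \cite[Theorem 7.1.3]{BS98}, while you use the torsion decomposition and localization to reduce to Grothendieck's theorem over the local rings $R_{\frak m_j}$; your extra care in checking $D$-linearity of the \v{C}ech-complex identification is a point the paper leaves implicit.
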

\begin{proof} We have $\dim R/I  = 0$ so $\sqrt{I} = \frak m_1 \cap \cdots \cap \frak m_r$ with $\frak m_i \in \mathrm{Max}(R)$ for all $i = 0, ..., r$. By the Mayer-Vietoris sequence we have  $H^i_I(R) \cong H^i_{\frak m_1}(R) \oplus \cdots \oplus H^i_{\frak m_r}(R)$. So $H^i_I(R)$ is Artinian for all $i \ge 0$ by \cite[Theorem 7.1.3]{BS98}. By Lemma \ref{L3.3} we have $H^i_{IS}(S) \cong H^i_I(R) \otimes_R S$ is $\Sigma$-finite.
\end{proof}

The following is very useful in the sequel.

 \begin{lemma}\label{L3.6} Let $0 \to M' \to M \to M'' \to 0$ be a short exact sequence of $D$-modules. Then
\begin{enumerate}[{(i)}]\rm
\item {\it If $M$ is $\Sigma$-finite then $M'$ and $M''$ are $\Sigma$-finite.}
\item {\it Conversely, if $M'$ and $M''$ are $\Sigma$-finite and $M'$ has finite length as a $D$-module, then $M$ is $\Sigma$-finite.}
\end{enumerate}
\end{lemma}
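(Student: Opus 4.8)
The plan is to verify the three defining conditions of $\Sigma$-finiteness directly in each case, relying throughout on the elementary Jordan--Hölder facts for finite length $D$-modules: if $N_1 \subseteq N_2$ both have finite length then $\mathcal{C}(N_1) \subseteq \mathcal{C}(N_2)$ and $\mathcal{C}(N_2/N_1) \subseteq \mathcal{C}(N_2)$, and more precisely $\mathcal{C}(N_2) = \mathcal{C}(N_1) \cup \mathcal{C}(N_2/N_1)$. Write $\pi \colon M \to M''$ for the projection.

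For part (i), the submodule $M'$ is the easy direction: every finite length $D$-submodule of $M'$ is one of $M$, so $\mathrm{Fin}(M') \subseteq \mathrm{Fin}(M)$, and all three conditions for $M'$ follow from those for $M$ (the support only shrinks; the union condition holds because for each $x \in M'$ the cyclic module $Dx$ lies in some member of $\mathrm{Fin}(M)$, hence in $\mathrm{Fin}(M)$, and is contained in $M'$; and $\mathcal{C}(M') \subseteq \mathcal{C}(M)$ is finite with each factor inheriting condition (iii)). For the quotient $M''$ the key is a lifting argument. Given $N'' \in \mathrm{Fin}(M'')$, choose $D$-generators $\bar x_1, \dots, \bar x_k$, lift them to $x_i \in M$, and enclose each $x_i$ in some $N_i \in \mathrm{Fin}(M)$; then $N := \sum_i N_i \in \mathrm{Fin}(M)$ and $N'' \subseteq \pi(N)$, so $\mathcal{C}(N'') \subseteq \mathcal{C}(\pi(N)) \subseteq \mathcal{C}(N) \subseteq \mathcal{C}(M)$, giving condition (ii). The same lift (applied to a single element) shows every element of $M''$ lies in $\pi(N)$ for some $N \in \mathrm{Fin}(M)$, yielding the union condition, and condition (iii) is automatic since the factors already lie in $\mathcal{C}(M)$.

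For part (ii), the support condition and condition (iii) are immediate once the union and composition-factor conditions are in place, so the work lies in those two. For the composition-factor condition, given $N \in \mathrm{Fin}(M)$ I would use the short exact sequence $0 \to N \cap M' \to N \to \pi(N) \to 0$ to obtain $\mathcal{C}(N) = \mathcal{C}(N \cap M') \cup \mathcal{C}(\pi(N)) \subseteq \mathcal{C}(M') \cup \mathcal{C}(M'')$, both of which are finite; notably this step does not require $M'$ to have finite length. The finite length hypothesis on $M'$ enters precisely in the union condition: given $x \in M$, its image $\pi(x)$ lies in some $N'' \in \mathrm{Fin}(M'')$, and then $\pi^{-1}(N'')$ sits in a short exact sequence $0 \to M' \to \pi^{-1}(N'') \to N'' \to 0$ whose two ends have finite length, so $\pi^{-1}(N'')$ has finite length and contains $x$.

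I expect the main obstacle to be this last union condition in (ii). Without finiteness of the length of $M'$, the natural candidate $\pi^{-1}(N'')$ --- equivalently the cyclic module $Dx$ --- need not have finite length, since its intersection with $M'$ is only a $D$-submodule of a $\Sigma$-finite module and such submodules need not themselves be of finite length. This is exactly where the hypothesis is indispensable, and it is worth recording that every other condition goes through under the weaker assumption that $M'$ is merely $\Sigma$-finite.
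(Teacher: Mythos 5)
Your proof is correct and takes essentially the same approach as the paper: the heart of part (ii) is the identical preimage argument (the pullback $\pi^{-1}(N'')$ of a finite length submodule of $M''$ has finite length because both $M'$ and $N''$ do), and your Jordan--H\"older bookkeeping simply supplies the details the paper dismisses as ``not difficult to prove,'' while for part (i) you prove directly what the paper cites from \cite{N14}. One small caveat: the finite-length hypothesis on $M'$ is indispensable only for this argument, not necessarily for the statement itself --- the paper notes that when $\mathbb{Q} \subseteq R$ the $\Sigma$-finite $D$-modules form a Serre subcategory, hence are closed under arbitrary extensions, and whether that condition on $R$ can be removed is left open there.
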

\begin{proof} (i) This part is \cite[Proposition 3.6]{N14}. \\
(ii) Since $M''$ is $\Sigma$-finite we have $M'' = \cup_{N'' \in \mathrm{Fin}(M'')}N''$. For each $N'' \in \mathrm{Fin}(M'')$, let $N$ be the preimage of $N''$. One can check that $N$ admits a $D$-module structure. We have the following short exact sequence of $D$-modules.
$$0 \to M' \to N \to N'' \to 0.$$
Since $M'$ has finite length as a $D$-module we have $N$ has finite length as a $D$-module. Hence $M = \cup_{N \in \mathrm{Fin}(M)}N$. The two last conditions of Definition \ref{D3.1} are not difficult to prove. \end{proof}
Recalling that a Serre's category is a category that closes under taking submodules, quotients and extensions. If $R$ contains the rational numbers, then the category of $\Sigma$-finite $D$-modules is a Serre's subcategory of the category of $D$-module (cf. \cite[Proposition 3.7]{N14}). At the time of writing, we do not know whether the condition $\mathbb{Q} \subseteq R$ can be removed. Fortunately, the statement of Lemma \ref{L3.6} (ii) is enough for our purpose. In the following we prove the global case of \cite[Proposition 4.3]{N14}. While the proof of \cite{N14} is based on spectral sequences, our proof is elementary.
 \begin{proposition}\label{P3.7}
 Let $R$ be a (not necessary local) Noetherian ring and $S = R[x_1, ...,x_n]$ or $S = R[[x_1, ...,x_n]]$. Let $J$ be an ideal of $S$ such that $\dim R/J \cap R = 0$. Then $H^i_J(S)$ is $\Sigma$-finite for every $i \in \mathbb{N}$.
 \end{proposition}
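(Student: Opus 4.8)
Write $I=J\cap R$, so that $R/I$ is Artinian; let $\frak m_1,\dots,\frak m_r$ be its maximal ideals. First note that the support hypothesis in Definition \ref{D3.1} is automatic: every $H^i_J(S)$ is $J$-torsion, and since $IS\subseteq J$ it is $I$-power torsion over $R$, whence $\Supp_R H^i_J(S)\subseteq V(I)=\{\frak m_1,\dots,\frak m_r\}\subseteq\Max(R)$. The plan is to isolate the one input that genuinely produces finite length $D$-modules, namely Lyubeznik's theorem that over a field $H^i_J$ lands in the category of Example \ref{E2.1}(iv) and hence has finite length, and then to assemble everything using only Lemma \ref{L3.6}(ii). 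The binding constraint is that we may use (ii) but not closure under extensions; so in every short exact sequence we build, the finite length constituent must occur as the \emph{submodule}.

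I would first settle the case where $R$ is Artinian (for which the hypothesis holds for every $J$), by induction on $\length_R R$. If $\length_R R=1$ then $R$ is a field and $H^i_J(S)$ has finite length as a $D$-module by Example \ref{E2.1}(iv), hence is $\Sigma$-finite. If $\length_R R>1$, choose a minimal nonzero ideal $\frak b\subseteq R$; then $\frak b\cong R/\frak m$ for some $\frak m\in\Max(R)$, and flatness of $S$ over $R$ gives a short exact sequence of $D$-modules
\[0\to \frak b\otimes_R S\to S\to (R/\frak b)\otimes_R S\to 0,\]
in which $\frak b\otimes_R S\cong(R/\frak m)\otimes_R S$ has field base and $(R/\frak b)\otimes_R S$ has strictly shorter length. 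Passing to the long exact sequence for $H^\bullet_J$ and setting $A=\im\big(H^i_J(\frak b\otimes_R S)\to H^i_J(S)\big)$ and $B=\im\big(H^i_J(S)\to H^i_J((R/\frak b)\otimes_R S)\big)$, I obtain $0\to A\to H^i_J(S)\to B\to 0$, where $A$ is a quotient of a finite length $D$-module (Example \ref{E2.1}(iv)), hence of finite length, and $B$ is a $D$-submodule of a $\Sigma$-finite module, hence $\Sigma$-finite by Lemma \ref{L3.6}(i). Lemma \ref{L3.6}(ii) then yields that $H^i_J(S)$ is $\Sigma$-finite. The decisive point is that the field piece appears as the submodule $A$, so that (ii) is applicable.

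To pass from Artinian $R$ to a general Noetherian $R$ with $\dim R/I=0$, I would exploit that $H:=H^i_J(S)$ is $I$-power torsion: writing $H_t=(0:_H I^t)$, each $H_t$ is a $D$-submodule (since $I\subseteq R$ lies in the center of $D(S,R)$) annihilated by $I^t$, hence a module over the Artinian ring $R/I^t$, with $S/I^tS=(R/I^t)[X]$; moreover $H=\bigcup_t H_t$. The aim is to show that each $H_t$ is $\Sigma$-finite with simple factors drawn from a single finite set independent of $t$, and then to read off from Definition \ref{D3.1} that the ascending union $H$ is $\Sigma$-finite: indeed any $N\in\mathrm{Fin}(H)$ is annihilated by a power of $I$ (its simple factors are killed by some $\frak m_j\supseteq I$), hence lies in some $H_t$, so $\mathcal{C}(H)=\bigcup_t\mathcal{C}(H_t)$. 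I expect the main obstacle to be precisely this comparison with the Artinian quotients: to control $H_t$ one uses $0\to I^tS\to S\to S/I^tS\to 0$, whose long exact sequence relates $H^i_J(S)$ to the $\Sigma$-finite module $H^i_J(S/I^tS)$ (the Artinian case just proved) but also involves $H^i_J(I^tS)$; disentangling the kernel and cokernel of $H^i_J(S)\to H^i_J(S/I^tS)$ so as to identify the layers $H_t$, and to do so with a uniform bound on the simple factors, is the delicate step. Once this is in place the extension problem never arises, because the assembly is carried out layer by layer through Lemma \ref{L3.6}(ii), with the finite length field constituents always appearing as submodules, exactly as in the Artinian case.
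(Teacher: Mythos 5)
Your treatment of the Artinian case is correct and self-contained: the induction on $\length_R R$, with the minimal ideal $\frak b \cong R/\frak m$ placed as the submodule so that the finite length constituent of the extension is $A = \im\big(H^i_J(\frak b\otimes_R S) \to H^i_J(S)\big)$, is a legitimate way to apply Lemma \ref{L3.6}(ii), and it is organized differently from the paper's corresponding step (the paper instead treats the case $\Ht(\frak m)=0$ for arbitrary Noetherian $R$ by showing $H^i_J(\overline{S})=0$ where $\overline{S}=(R/H^0_{\frak m}(R))\otimes_R S$). But the proposition concerns an arbitrary Noetherian $R$ with $\dim R/(J\cap R)=0$, and that is exactly the part you have not proved: you reduce it to showing that each layer $H_t=(0:_{H^i_J(S)} I^t)$ is $\Sigma$-finite with composition factors drawn from a single finite set, and you explicitly leave this ``delicate step'' open. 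That step is the heart of the proposition, not a technicality.

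Moreover, the route you sketch for it --- comparing $H_t$ with $H^i_J(S/I^tS)$ via $0\to I^tS\to S\to S/I^tS\to 0$ --- has the cohomological degree wrong. Already when $I$ is generated by a single $S$-regular element $a$, the sequence $0\to S\xrightarrow{a^t} S\to S/a^tS\to 0$ exhibits $(0:_{H^i_J(S)}a^t)$ as a quotient of $H^{i-1}_J(S/a^tS)$: the layers of $H^i_J(S)$ are governed by local cohomology of the quotient rings in degree $i-1$, not $i$. (Concretely, for $R=k[[t]]$ and $J=tS$ one has $H^1_J(S)=S_t/S$ with layers $(0:t^m)\cong S/t^mS=H^0_J(S/t^mS)$, while $H^1_J(S/t^mS)=0$.) The paper's proof is built precisely around this shift: after a Mayer--Vietoris reduction to the case $J\cap R=\frak m$ maximal and a reduction (via Lemma \ref{L3.6}, in the same spirit as your Artinian step) to the case $H^0_{\frak m}(R)=0$, it chooses an $R$-regular element $a\in\frak m\subseteq J$ and uses
$$H^i_J(S)\cong H^{i-1}_J(S_a/S)\cong \varinjlim_n H^{i-1}_J(S/a^nS),$$
then argues by induction on $\Ht(\frak m)$ (so the rings $R/a^nR$ need not be Artinian, only of smaller height), controls the factors uniformly via $\mathcal{C}(H^{i-1}_J(S/a^nS))\subseteq\mathcal{C}(H^{i-1}_J(S/aS))$ coming from $0\to S/aS\xrightarrow{a^{n-1}} S/a^nS\to S/a^{n-1}S\to 0$, and only then invokes the direct limit criterion of \cite{N14}. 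To complete your argument you would have to replace the same-degree layer comparison by this degree-shifting one (or an equivalent device), at which point you would essentially be reproducing the paper's proof.
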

\begin{proof} We can assume that $J$ is a radical ideal, so $J \cap R = \frak m_1 \cap \cdots \cap \frak m_r$ where $\frak m_k \in \mathrm{Max}(R)$ for all $k =  1, ..., r$. Set $J_k = \frak m_kS+J$, $k = 1, ..., r$, we have $J = J_1 \cap \cdots \cap J_r$. Since $\frak m_k + \frak m_h = R$ for all $k \neq h$, we have $J_k + J_h = S$ for all $k \neq h$. By using Mayer-Vietoris's sequence one can prove that
 $$H^i_J(S) \cong H^i_{J_1}(S) \oplus \cdots \oplus H^i_{J_r}(S)$$
 for all $i \ge 0$. Therefore, it is enough to prove the assertion in the case $J \cap R = \frak m \in \mathrm{Max}(R)$ (cf. \cite[Lemma 3.9]{N14}).
 We proceed by induction of $t = \mathrm{ht}(\frak m)$.\\
 \indent The case $t=0$, we have that $\frak m$ is a minimal prime of $R$. Let $U = H^0_{\frak m}(R)$ and $\overline{R} = R/U$. We have $U$ is an $R$-module of finite length so $U \otimes_R S$ is a $\Sigma$-finite $D$-module by Lemma \ref{L3.3}. By \cite[Corollary 3.10]{N14}, $H^i_J(U \otimes_R S)$ is $\Sigma$-finite for all $i \ge 0$. Applying local cohomology functor for the short exact sequence
 $$0 \to U \otimes_R S \to S \to \overline{S} \to 0,$$
 where $\overline{S} = \overline{R} \otimes_RS$, we get the following exact sequence
 $$\cdots \to H^{i-1}_J(\overline{S}) \to H^i_J(U \otimes_R S) \to H^i_J(S) \to H^{i}_J(\overline{S}) \to \cdots.$$
On the other hand, we have $\mathrm{Ass}_R\overline{R} = \mathrm{Ass}_RR \setminus V(\frak m)$. Notice that $\mathrm{ht}(\frak m) = 0$ so $\frak p \nsubseteq \frak m$ for all $\frak p \in \mathrm{Ass}_R\overline{R}$, and hence $\mathrm{Ann}_R(\overline{R}) \nsubseteq \frak m$. Moreover $\frak m \in \mathrm{Max}(R)$ we have $\mathrm{Ann}_R(\overline{R}) + \frak m = R$. Therefore $1 \in \mathrm{Ann}_R(\overline{R})S + J$ because $J \cap R  = \frak m$. Thus $\mathrm{Ann}_S(\overline{S}) + J = S$ since $\mathrm{Ann}_S(\overline{S}) = \mathrm{Ann}_R(\overline{R}) S$. So $H^{i}_J(\overline{S}) = 0$ for all $i \ge 0$ and hence $H^i_J(S) \cong H^i_J(U \otimes_R S)$ is $\Sigma$-finite for all $i\ge 0$.

 For $t > 0 $, set $U = H^0_{\frak m}(R)$ and $\overline{R} = R/H^0_{\frak m}(R)$. Let $\overline{S} = \overline{R} \otimes_RS$. The short exact sequence
$$0 \to U \otimes_R S \to S \to \overline{S} \to 0$$
induces the exact sequence of local cohomology modules
$$\cdots \to H^i_J(U \otimes_R S) \overset{\alpha}{\to} H^i_J(S) \overset{\beta}{\to} H^i_J(\overline{S}) \to \cdots.$$
We have the short exact sequence
$$0 \to \mathrm{im}(\alpha) \to H^i_J(S) \to \mathrm{im}(\beta) \to 0.$$
Since $U$ has finite length as an $R$-module, $U \otimes_R S$ and hence $H^i_J(U \otimes_R S)$ have finite length as a $D$-module by Example \ref{E2.1} (iv) (see also \cite[Proposition 3.3]{N13}). Thus $\mathrm{im}(\alpha)$ is a $D$-module of finite length. Suppose $H^i_J(\overline{S})$ is $\Sigma$-finite we have $\mathrm{im}(\beta)$ is also a $\Sigma$-finite $D$-module by Lemma \ref{L3.6} (i). Lemma \ref{L3.6} (ii) implies that $H^i_J(S)$ is $\Sigma$-finite for all $i \ge 0$. Therefore we can assume henceforth that $H^0_{\frak m}(R) = 0$.
Choose an $R$-regular element $a \in \frak m = J \cap R$, we have $a$ is also $S$-regular and $a \in J$. So $H^0_J(S) = 0$. For $i \ge 1$ we consider the following short exact sequence
$$0 \to S \to S_a \to S_a/S \to 0.$$
This sequence induces the exact sequence of local cohomology
$$\cdots \to H^{i-1}_J(S_a) \to H^{i-1}_J(S_a/S) \to H^i_J(S) \to H^i_J(S_a) \to \cdots.$$
Notice that $a \in J$, so $H^i_J(S_a) = 0$ for all $i \ge 0$. Thus
$$H^i_J(S) \cong H^{i-1}_J(S_a/S) \cong H^{i-1}_J(\lim_n(S/a^nS)) \cong \lim_n H^{i-1}_J(S/a^nS).$$
By inductive hypothesis we have $H^{i-1}_J(S/a^nS)$ is a $\Sigma$-finite $D(S/a^nS, R/a^nR)$-module for all $n$ and $i \ge 1$. So $H^{i-1}_J(S/a^nS)$ is a $\Sigma$-finite $D(S, R)$-module for all $n$ and $i \ge 1$. By \cite[Proposition 3.11]{N14} we need only to prove that $\cup_n \mathcal{C}(H^{i}_J(S/a^nS)$ is finite for all $i \ge 0$. We shall prove that $\mathcal{C}(H^{i}_J(S/a^nS) \subseteq \mathcal{C}(H^i_J(S/aS))$ for all $n \ge 1$. The case $n = 1$ is trivial. For $n> 1$, the short exact sequence
$$0 \to S/aS \overset{a^{n-1}\cdot}{\longrightarrow} S/a^nS \to S/a^{n-1}S \to 0$$
induces the exact sequence
$$ \cdots \to H^i_J(S/aS) \to H^i_J(S/a^nS) \to H^i_J(S/a^{n-1}S) \to \cdots.$$
Hence $\mathcal{C}(H^i_J(S/a^nS)) \subseteq \mathcal{C}(H^i_J(S/aS)) \cup \mathcal{C}(H^i_J(S/a^{n-1}S)) \subseteq \mathcal{C}(H^i_J(S/aS))$ by inductive hypothesis. The proof is complete.
\end{proof}
We are ready to prove the main result of this section, it gives a partial positive answer for \cite[Question 5.1]{N14}.
\begin{theorem}\label{T3.8} Let $(R, \frak m)$ be a local ring and $S = R[x_1, ...,x_n]$ or $S = R[[x_1, ...,x_n]]$. Let $J$ be an ideal of $S$ such that $\dim R/(J \cap R) \leq 1$. Then $H^j_{\frak mS}H^i_J(S)$ is $\Sigma$-finite for every $i, j \in \mathbb{N}$. In particular $\mathrm{Ass}_S H^j_{\frak mS}H^i_J(S)$ is finite for all $i, j \in \mathbb{N}$.
  \end{theorem}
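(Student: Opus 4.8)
The plan is to reduce, via a single well-chosen element, the outer functor $H^j_{\frak m S}$ to local cohomology supported at one element, so that only the degrees $j=0,1$ survive and Proposition \ref{P3.7} becomes applicable on the quotients $S/a^nS$.

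First, if $\dim R/(J\cap R)=0$ then $H^i_J(S)$ is already $\Sigma$-finite by Proposition \ref{P3.7}, and hence so is $H^j_{\frak m S}H^i_J(S)$ by Remark \ref{R3.2}; so assume $\dim R/(J\cap R)=1$ and write $I=J\cap R$. Next I would reduce to the case $\Gamma_{\frak m}(R)=0$. Setting $U=\Gamma_{\frak m}(R)$, an $R$-module of finite length, and $\overline S=(R/U)\otimes_R S$, the sequence $0\to U\otimes_R S\to S\to\overline S\to 0$ produces, after splitting the long exact sequences of $H^i_J(-)$ and then of $H^j_{\frak m S}(-)$ into short exact sequences, only finite-length $D$-module error terms: $U\otimes_R S$ is a finite-length $D$-module by Example \ref{E2.1}(iv), any finite-length $D$-module is $\frak m S$-torsion so its higher $H^j_{\frak m S}$ vanish while $H^0_{\frak m S}$ leaves it of finite length. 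Lemma \ref{L3.6} then propagates $\Sigma$-finiteness from $\overline S$ back to $S$. Since $R/U$ has positive depth at $\frak m$ and the only primes to avoid (the associated primes of $R/U$ and the one-dimensional minimal primes of $I$) all lie strictly inside $\frak m$, prime avoidance yields an $R$-regular element $a\in\frak m$ with $\dim R/(I+aR)=0$.

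The crucial step is the identification $H^j_{\frak m S}(N)=H^j_{aS}(N)$ for $N=H^i_J(S)$. Because $N$ is $J$-torsion its injective hull remains $J$-torsion, so $\Gamma_{\frak b}$ and $\Gamma_{\frak b+J}$ agree on an injective resolution of $N$ for every ideal $\frak b$; hence $H^j_{\frak m S}(N)=H^j_{\frak m S+J}(N)$ and $H^j_{aS}(N)=H^j_{aS+J}(N)$. As $\frak m=\sqrt{I+aR}$ and $IS\subseteq J$ we get $\sqrt{\frak m S+J}=\sqrt{aS+J}$, so the two coincide. In particular $H^j_{\frak m S}(N)=0$ for $j\ge 2$, and everything is reduced to the single element $a$. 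I expect this collapse to be the main obstacle, since it is exactly what makes the hypothesis $\dim R/(J\cap R)\le 1$ bite.

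Finally, write $H^0_{aS}(N)=\Gamma_a(N)=\varinjlim_n(0:_N a^n)$ and $H^1_{aS}(N)=\varinjlim_n N/a^nN$. The long exact sequence of $H^i_J(-)$ attached to $0\to S\xrightarrow{a^n}S\to S/a^nS\to 0$ exhibits $(0:_N a^n)$ as a quotient of $H^{i-1}_J(S/a^nS)$ and $N/a^nN$ as a submodule of $H^i_J(S/a^nS)$. Since $S/a^nS=(R/a^nR)\otimes_R S$ and, because $\sqrt{I+aR}=\frak m$, the contraction of $J(S/a^nS)$ to $R/a^nR$ has dimension zero, Proposition \ref{P3.7} makes each $H^\bullet_J(S/a^nS)$ a $\Sigma$-finite $D(S,R)$-module; hence by Lemma \ref{L3.6}(i) every $(0:_N a^n)$ and every $N/a^nN$ is $\Sigma$-finite. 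To pass to the limit I would, exactly as in the proof of Proposition \ref{P3.7}, use the sequences $0\to S/aS\xrightarrow{a^{n-1}}S/a^nS\to S/a^{n-1}S\to 0$ to obtain $\mathcal C(H^\bullet_J(S/a^nS))\subseteq\mathcal C(H^\bullet_J(S/aS))$ for all $n$, so that $\bigcup_n\mathcal C((0:_N a^n))$ and $\bigcup_n\mathcal C(N/a^nN)$ are finite; then \cite[Proposition 3.11]{N14} shows that $\Gamma_a(N)$ and $H^1_{aS}(N)$ are $\Sigma$-finite. Combined with $H^j_{\frak m S}(N)=0$ for $j\ge 2$, this gives that $H^j_{\frak m S}H^i_J(S)$ is $\Sigma$-finite for all $i,j$, and the finiteness of $\Ass_S H^j_{\frak m S}H^i_J(S)$ is then immediate from $\Sigma$-finiteness.
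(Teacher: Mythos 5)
Your proof is correct in substance, but it takes a genuinely different route from the paper's. The step you share with the paper is the first one: both arguments replace $H^j_{\frak m S}$ by local cohomology at a single element of $\frak m$ (your $a$, the paper's $f$), using that $H^i_J(S)$ is $J$-torsion, so that only $j=0,1$ survive. After that the paths diverge. The paper does \emph{not} need its element $f$ to be a nonzerodivisor --- it only needs $\frak m S\subseteq\sqrt{J+fS}$ --- and instead of any limit argument it invokes the comparison sequence of \cite[Proposition 8.1.2]{BS98} together with the four-term sequence $0\to H^0_{fS}H^i_J(S)\to H^i_J(S)\to H^i_J(S_f)\to H^1_{fS}H^i_J(S)\to 0$, which splice into the single short exact sequence $0\to H^1_{fS}H^{i-1}_J(S)\to H^i_{(J+fS)}(S)\to H^0_{fS}H^i_J(S)\to 0$. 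Since $\dim R/((J+fS)\cap R)=0$, Proposition \ref{P3.7} makes the middle term $\Sigma$-finite, and Lemma \ref{L3.6}(i) then handles $j=0$ and $j=1$ simultaneously --- no depth reduction, no regular element, no direct limits. You, by contrast, need $a$ to be $S$-regular, which forces the preliminary reduction modulo $U=\Gamma_{\frak m}(R)$, and you then re-run the $a$-adic limit machinery of the paper's own proof of Proposition \ref{P3.7}: realizing $(0:_N a^n)$ and $N/a^nN$ as subquotients of $H^\bullet_J(S/a^nS)$, bounding composition factors uniformly via $0\to S/aS\to S/a^nS\to S/a^{n-1}S\to 0$, and citing \cite[Proposition 3.11]{N14} to pass to the limit. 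This is legitimate (it uses exactly the ingredients the paper already relies on in Proposition \ref{P3.7}) but longer; what the paper's route buys is that the whole theorem falls out of one application of Proposition \ref{P3.7}. One slip you should repair: the blanket claim that \emph{any} finite-length $D$-module is $\frak m S$-torsion is not justified (nothing in the paper says the simple $D(S,R)$-modules are killed by $\frak m$). It is also not needed: the error terms in your reduction are subquotients of the modules $H^\bullet_J(U\otimes_R S)$, and these are $\frak m S$-torsion simply because $U\otimes_R S$ is and local cohomology of a torsion module stays torsion.
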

\begin{proof}
Since $\dim R/(J \cap R) \le 1$, there exists $f \in \frak m$ such that $\frak mS \subset \sqrt{(J+fS)}$. Thus $\sqrt{J + \frak mS} = \sqrt{J + fS}$. Notice that $H^i_J(S)$ is $J$-torsion. So
$$H^j_{\frak mS}H^i_J(S) \cong H^j_{(J + \frak mS)}H^i_J(S) \cong H^j_{(J + fS)}H^i_J(S) \cong H^j_{fS}H^i_J(S)$$
for all $i, j \ge 0$. Therefore $H^j_{\frak mS}H^i_J(S) = 0$ for all $j > 1$. Hence we need only to prove that $H^0_{fS}H^i_J(S)$ and $H^1_{fS}H^i_J(S)$ are $\Sigma$-finite for all $i \ge 0$. By \cite[Proposition 8.1.2]{BS98} we have the following exact sequence
$$\cdots \to H^{i-1}_J(S) \to  H^{i-1}_J(S_f) \to H^i_{(J + fS)}(S) \to H^i_J(S) \to  H^i_J(S_f) \to \cdots.$$
On the other hand we have the following exact sequence (cf. \cite[Remark 2.2.17]{BS98})
$$0 \to H^0_{fS}H^i_J(S) \to H^i_J(S) \to  H^i_J(S_f) \to H^1_{fS}H^i_J(S) \to 0$$
for all $i \ge 0$. Therefore for each $i \ge 0$ we have the following short exact sequence
$$0 \to H^1_{fS}H^{i-1}_J(S) \to H^i_{(J + fS)}(S) \to H^0_{fS}H^i_J(S) \to 0.$$
Since $\dim R/((J + fS) \cap R) = 0$, we have $H^i_{(J + fS)}(S)$ is $\Sigma$-finite for all $i \ge 0$ by Proposition \ref{P3.7}. Hence $H^0_{fS}H^i_J(S)$ and $H^1_{fS}H^i_J(S)$ are $\Sigma$-finite for all $i \ge 0$ by Lemma \ref{L3.6}. The last assertion follows from the property of $\Sigma$-finite $D$-modules. The proof is complete.
\end{proof}
We get a result of on the finiteness of associated primes of local cohomology of polynomial rings.
\begin{corollary} \label{C3.9}
  Let $(R, \frak m)$ be a local ring and $S = R[x_1, ..., x_n]$. Let $J$ be an ideal of $S$ such that $\dim R/(J \cap R) \leq 1$. Then $\mathrm{Ass}_S H^i_J(S)$ is finite for all $i \ge 0$.
\end{corollary}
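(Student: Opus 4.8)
The plan is to stratify $\mathrm{Ass}_S H^i_J(S)$ by the contraction $P \cap R$ of each associated prime $P$, treating the unique maximal contraction by Theorem~\ref{T3.8} and the remaining (non-maximal) contractions by localizing and invoking Proposition~\ref{P3.7}. Writing $\frak a = J \cap R$, the first thing I would record is that $H^i_J(S)$ is $J$-torsion, so $\mathrm{Ass}_S H^i_J(S) \subseteq V(J)$ and hence $\frak a \subseteq P \cap R$ for every $P \in \mathrm{Ass}_S H^i_J(S)$. Since $R$ is local with maximal ideal $\frak m$ and $\dim R/\frak a \le 1$, every prime containing $\frak a$ is either minimal over $\frak a$ or equal to $\frak m$, so $V(\frak a) \subseteq \mathrm{Min}(\frak a) \cup \{\frak m\}$; as $R$ is Noetherian, $\mathrm{Min}(\frak a) = \{\frak p_1, \dots, \frak p_s\}$ is finite. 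Consequently the contractions $P \cap R$ range over the finite set $\{\frak m, \frak p_1, \dots, \frak p_s\}$, and it suffices to show that each stratum $A_{\frak q} := \{P \in \mathrm{Ass}_S H^i_J(S) : P \cap R = \frak q\}$ is finite.

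For the maximal contraction $\frak q = \frak m$, I would use that $P \cap R = \frak m$ is equivalent to $\frak m S \subseteq P$, so that $A_{\frak m} = \mathrm{Ass}_S\big(\Gamma_{\frak m S}(H^i_J(S))\big) = \mathrm{Ass}_S H^0_{\frak m S}(H^i_J(S))$. By Theorem~\ref{T3.8} the module $H^0_{\frak m S}(H^i_J(S))$ is $\Sigma$-finite, and a $\Sigma$-finite $D$-module has only finitely many associated primes (the property already invoked at the end of the proof of Theorem~\ref{T3.8}); hence $A_{\frak m}$ is finite.

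For a non-maximal contraction $\frak q = \frak p_l$ I would localize at $\frak p_l$. This is the step where the hypothesis that $S$ is a polynomial ring is essential: localization commutes with the formation of $S = R[x_1, \dots, x_n]$, giving $(R \setminus \frak p_l)^{-1} S = R_{\frak p_l}[x_1, \dots, x_n]$ together with $H^i_J(S)_{\frak p_l} \cong H^i_{J S_{\frak p_l}}(S_{\frak p_l})$. Since $\frak p_l$ is minimal over $\frak a$, we have $\dim R_{\frak p_l}/(J S_{\frak p_l} \cap R_{\frak p_l}) = \dim R_{\frak p_l}/\frak a R_{\frak p_l} = 0$, so Proposition~\ref{P3.7}, applied over the Noetherian local ring $R_{\frak p_l}$, shows $H^i_{J S_{\frak p_l}}(S_{\frak p_l})$ is $\Sigma$-finite and therefore has finitely many associated primes over $S_{\frak p_l}$. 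Because $\mathrm{Ass}_{S_{\frak p_l}}\big(H^i_J(S)_{\frak p_l}\big) = \{ P S_{\frak p_l} : P \in \mathrm{Ass}_S H^i_J(S),\ P \cap R \subseteq \frak p_l \}$ and $P \mapsto P S_{\frak p_l}$ is injective on such $P$, the stratum $A_{\frak p_l}$ injects into this finite set and is finite. Taking the union over the finitely many strata completes the argument. The only genuine obstacle is this localization step: it is precisely what fails for $S = R[[x_1, \dots, x_n]]$, since there $(R\setminus\frak p)^{-1} R[[x]] \neq R_{\frak p}[[x]]$, which is why I expect the corollary to be stated only in the polynomial case.
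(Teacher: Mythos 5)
Your proof is correct, and it organizes the argument by a genuinely different decomposition than the paper's. The paper's proof re-runs the first step of Theorem~\ref{T3.8}: it chooses a single $f \in \frak m$ with $\frak m S \subseteq \sqrt{J+fS}$ and uses the exact sequence $\cdots \to H^i_{(J+fS)}(S) \overset{\alpha}{\to} H^i_J(S) \to H^i_J(S_f) \to \cdots$, so that $\mathrm{Ass}_S H^i_J(S) \subseteq \mathrm{Ass}_S(\mathrm{im}(\alpha)) \cup \mathrm{Ass}_S H^i_J(S_f)$; the first set is finite because $H^i_{(J+fS)}(S)$ is $\Sigma$-finite by Proposition~\ref{P3.7} (as $\dim R/((J+fS)\cap R) = 0$), and the second because $S_f \cong R_f[x_1,\dots,x_n]$ with the contracted ideal of dimension zero, so Proposition~\ref{P3.7} applies again over $R_f$. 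You instead stratify $\mathrm{Ass}_S H^i_J(S)$ exactly by contraction to $R$: the stratum over $\frak m$ is $\mathrm{Ass}_S H^0_{\frak m S}(H^i_J(S))$, finite by Theorem~\ref{T3.8} (which you use as a black box, whereas the paper does not cite it), and each stratum over a minimal prime $\frak p$ of $J \cap R$ is controlled by localizing at $\frak p$ and applying Proposition~\ref{P3.7} over $R_{\frak p}$. The essential mechanism is the same in both — reduce to the dimension-zero case of Proposition~\ref{P3.7} via a localization that exists only for polynomial rings — but you trade the paper's single localization $S_f$ (and its prime-avoidance choice of $f$, hidden inside your appeal to Theorem~\ref{T3.8}) for finitely many localizations at the minimal primes of $J\cap R$; what this buys is an exact, transparent bookkeeping of where each associated prime is detected, at the cost of being slightly less economical. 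One small imprecision: $JS_{\frak p}\cap R_{\frak p}$ need not equal $\frak a R_{\frak p}$, only contain it; but that inclusion already forces the relevant dimension to be zero (or $JS_{\frak p} = S_{\frak p}$, in which case the local cohomology vanishes), so nothing breaks. Your closing remark that the argument fails for $S = R[[x_1,\dots,x_n]]$ because $(R\setminus\frak p)^{-1}R[[x]] \neq R_{\frak p}[[x]]$ is precisely why the paper also states this corollary only in the polynomial case: its localization $S_f \cong R_f[x_1,\dots,x_n]$ fails for power series in exactly the same way.
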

\begin{proof}
Similarly the proof of Theorem \ref{T3.8} we have an element $f \in \frak m$ such that $\frak mS \subseteq \sqrt{(J+fS)}$. Consider the exact sequence
$$\cdots \to H^i_{(J + fS)}(S) \overset{\alpha}{\to} H^i_J(S) \to  H^i_J(S_f) \to \cdots.$$
We have $\mathrm{Ass}_S H^i_J(S) \subseteq \mathrm{Ass}_S(\mathrm{im}(\alpha)) \cup \mathrm{Ass}_S H^i_J(S_f)$. Since $H^i_{(J + fS)}(S)$ is $\Sigma$-finite, so is $\mathrm{im}(\alpha)$. Hence $\mathrm{Ass}_S(\mathrm{im}(\alpha))$ is a finite set. On the other hand we have $H^i_J(S_f) \cong H^i_{(JS_f)}(S_f)$. Notice that $S_f \cong R_f[x_1, ..., x_n]$ and $\dim R_f/(JS_f \cap R_f) = 0$, we have $H^i_J(S_f)$ is a $\Sigma$-finite $D(S_f, R_f)$-module by Proposition \ref{P3.7}. So $\mathrm{Ass}_S H^i_J(S_f)$ is finite. The proof is complete.
\end{proof}

\section{Rings of dimension one}
In this section $R$ is a Noetherian ring of dimension one and $S = R[x_1, ..., x_n]$ or $S = R[[x_1, ..., x_n]]$. We recall our question.\\
{\bf Question \ref{Q1.3}.} Is it true that $H^i_J(S)$ has only finitely many associated primes for all ideals $J$ of $S$ and all $i \ge 0$?

The following is an immediate consequence of Corollary \ref{C3.9} which was shown before by N\'{u}\~{n}ez-Betancourt in \cite[Corollary 3.7]{N13}.
\begin{corollary}
Suppose that $R$ is  local and $S = R[x_1, ..., x_n]$. Then $\mathrm{Ass}_SH^i_J(S)$ is finite for all ideal $J$ and all $i \ge 0$.
\end{corollary}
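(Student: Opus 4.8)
The plan is to recognize this statement as a direct specialization of Corollary \ref{C3.9}. That corollary has exactly two hypotheses on top of the polynomial-ring form of $S$: that $R$ is local, and that $\dim R/(J \cap R) \le 1$. The first is assumed here, and the standing assumption of this section is precisely $\dim R = 1$. So my strategy is to check that the dimension bound holds automatically for every ideal $J$, and then simply invoke Corollary \ref{C3.9}.

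First I would observe that for any ideal $J$ of $S = R[x_1, \dots, x_n]$, the contraction $J \cap R$ is an ideal of $R$, so $R/(J \cap R)$ is a homomorphic image of $R$ and therefore satisfies $\dim R/(J \cap R) \le \dim R = 1$. Thus the hypotheses of Corollary \ref{C3.9} are met for this $J$, and applying that corollary yields that $\mathrm{Ass}_S H^i_J(S)$ is finite for all $i \ge 0$. Since $J$ was arbitrary, the claim follows at once.

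There is essentially no obstacle here: all of the real content has already been absorbed into Corollary \ref{C3.9}, and through it into Proposition \ref{P3.7} and the $\Sigma$-finiteness machinery of Section 3. The only point to verify is the trivial inequality $\dim R/(J \cap R) \le \dim R$, which holds because passing to a quotient ring cannot increase Krull dimension. I would note, by way of context, that this recovers via the present $D$-module approach the polynomial-ring case of Question \ref{Q1.3} for local $R$, in agreement with the earlier result of N\'{u}\~{n}ez-Betancourt cited above.
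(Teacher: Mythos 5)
Your proposal is correct and matches the paper exactly: the paper states this corollary as an immediate consequence of Corollary \ref{C3.9}, with the only (implicit) verification being the same trivial observation you make, namely that $\dim R/(J \cap R) \le \dim R = 1$ for every ideal $J$ of $S$, since the section's standing hypothesis is $\dim R = 1$.
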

We shall consider the question when $R$ contains a field of characteristic $p>0$. We start with the following.
\begin{lemma}\label{L4.2} Let $W$ is the largest ideal of finite length of $R$ and $\overline{R} = R/W$. Let $\overline{S} = \overline{R} \otimes_R S$. Suppose $\mathrm{Ass}_SH^i_J(\overline{S})$ is finite for all $i \ge 0$. Then $\mathrm{Ass}_SH^i_J(S)$ is finite for all $i \ge 0$.
\end{lemma}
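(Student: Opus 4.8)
The plan is to kill the finite-length part $W$ by a short exact sequence and then compare $H^i_J(S)$ with $H^i_J(\overline{S})$, the discrepancy being governed by a finite-length $D$-module whose associated primes are automatically finite. First I would observe that $S$ is flat over $R$, since both $R[x_1,\dots,x_n]$ and $R[[x_1,\dots,x_n]]$ are flat $R$-algebras. Applying the exact functor $-\otimes_R S$ to the defining sequence $0 \to W \to R \to \overline{R} \to 0$ therefore produces a short exact sequence of $D$-modules
$$0 \to W\otimes_R S \to S \to \overline{S} \to 0,$$
where $\overline{S} = \overline{R}\otimes_R S \cong S/WS$.

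Next I would record that, because $W$ has finite length as an $R$-module, Example \ref{E2.1}(iv) (the same fact used in the proof of Lemma \ref{L3.3}, that $L\otimes_R S$ is a finite-length $D$-module whenever $L$ has finite length) shows that $W\otimes_R S$ is a $D$-module of \emph{finite length}, and hence so is each $H^i_J(W\otimes_R S)$. In particular each $H^i_J(W\otimes_R S)$ is $\Sigma$-finite, so $\mathrm{Ass}_S H^i_J(W\otimes_R S)$ is finite for every $i$.

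I would then run the long exact sequence of local cohomology associated to the displayed short exact sequence,
$$\cdots \to H^i_J(W\otimes_R S) \overset{\alpha}{\to} H^i_J(S) \overset{\beta}{\to} H^i_J(\overline{S}) \to \cdots,$$
and split off the short exact sequence $0 \to \mathrm{im}(\alpha) \to H^i_J(S) \to \mathrm{im}(\beta) \to 0$. This yields $\mathrm{Ass}_S H^i_J(S) \subseteq \mathrm{Ass}_S(\mathrm{im}(\alpha)) \cup \mathrm{Ass}_S(\mathrm{im}(\beta))$. Since $\mathrm{im}(\alpha)$ is a quotient of the finite-length $D$-module $H^i_J(W\otimes_R S)$, it is again of finite length, hence $\Sigma$-finite, so $\mathrm{Ass}_S(\mathrm{im}(\alpha))$ is finite; and since $\mathrm{im}(\beta)$ is a submodule of $H^i_J(\overline{S})$, we have $\mathrm{Ass}_S(\mathrm{im}(\beta)) \subseteq \mathrm{Ass}_S H^i_J(\overline{S})$, which is finite by hypothesis. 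Combining the two inclusions gives the result.

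This argument is essentially routine and mirrors the reduction steps used in the proof of Proposition \ref{P3.7} and of Corollary \ref{C3.9}, so I do not anticipate a serious obstacle. The only points demanding a little care are the flatness of $S$ over $R$, which is what makes the displayed sequence exact, and the upgrade from ``$\Sigma$-finite'' to ``finite length'' for $W\otimes_R S$: it is precisely this finite-length property that forces $\mathrm{im}(\alpha)$, a \emph{quotient} rather than a submodule of $H^i_J(W\otimes_R S)$, to have only finitely many associated primes.
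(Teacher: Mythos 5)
Your proof is correct and takes essentially the same approach as the paper: the same short exact sequence $0 \to W\otimes_R S \to S \to \overline{S} \to 0$, the same long exact sequence, and the same bound $\mathrm{Ass}_S H^i_J(S) \subseteq \mathrm{Ass}_S(\mathrm{im}(\alpha)) \cup \mathrm{Ass}_S H^i_J(\overline{S})$. The only cosmetic difference is in how $\mathrm{Ass}_S(\mathrm{im}(\alpha))$ is shown finite: you pass through finite length of $H^i_J(W\otimes_R S)$ as a $D$-module (the fact the paper itself uses in Proposition \ref{P3.7}), while the paper invokes $\Sigma$-finiteness via Lemma \ref{L3.3} and Remark \ref{R3.2}.
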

\begin{proof} The short exact sequence
$$0 \to W\otimes_RS \to S \to \overline{S} \to 0$$
induces the exact sequence of local cohomology
$$\cdots \to H^i_J(W\otimes_RS) \overset{\alpha}{\to} H^i_J(S) \to H^i_J(\overline{S}) \to \cdots.$$
Since $W$ has finite length we have $H^i_J(W\otimes_RS)$ is a $\Sigma$-finite $D$-module by Lemma \ref{L3.3} and Remark \ref{R3.2}. Hence so is $\mathrm{im}(\alpha)$. Moreover $\mathrm{Ass}_SH^i_J(S) \subseteq \mathrm{Ass}_S(\mathrm{im}(\alpha)) \cup \mathrm{Ass}_SH^i_J(\overline{S})$. Therefore if $\mathrm{Ass}_SH^i_J(\overline{S})$ is finite, then so is $\mathrm{Ass}_SH^i_J(S)$.
\end{proof}

\begin{proposition}\label{P4.3} Let $R$ be an excellent domain of dimension one and of characteristic $p > 0$. Then $\mathrm{Ass}_SH^i_J(S)$ is finite for all ideal $J$ and all $i \ge 0$.
\end{proposition}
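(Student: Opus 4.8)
The plan is to reduce the problem to the regular ring obtained by normalizing $R$, where the characteristic-$p$ case of Lyubeznik's conjecture is available. Let $K = \mathrm{Frac}(R)$ and let $\widetilde{R}$ be the integral closure of $R$ in $K$. Since $R$ is an excellent (hence Nagata) one-dimensional domain, $\widetilde{R}$ is module-finite over $R$, and being a one-dimensional normal Noetherian domain it is a Dedekind domain, in particular a regular ring of characteristic $p$. As $R$ and $\widetilde{R}$ have the same fraction field, the conductor is nonzero, so the $R$-module $\widetilde{R}/R$ is finitely generated and supported at finitely many maximal ideals, hence of finite length. I set $\widetilde{S} = \widetilde{R} \otimes_R S$; one checks that $\widetilde{S}$ is $\widetilde{R}[x_1,\dots,x_n]$ or $\widetilde{R}[[x_1,\dots,x_n]]$, a regular ring of characteristic $p$ that is module-finite over $S$.

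Because $S$ is flat over $R$, tensoring the exact sequence $0 \to R \to \widetilde{R} \to \widetilde{R}/R \to 0$ with $S$ gives a short exact sequence of $D$-modules $0 \to S \to \widetilde{S} \to C \to 0$, where $C = (\widetilde{R}/R)\otimes_R S$. By Lemma \ref{L3.3} (together with Remark \ref{R3.2}) the module $C$ is $\Sigma$-finite, and therefore so is $H^{i}_J(C)$ for every $i$; in particular $\mathrm{Ass}_S H^i_J(C)$ is finite. The long exact sequence $\cdots \to H^{i-1}_J(C) \overset{\alpha}{\to} H^i_J(S) \overset{\beta}{\to} H^i_J(\widetilde{S}) \to \cdots$ yields the short exact sequence $0 \to \mathrm{im}(\alpha) \to H^i_J(S) \to \mathrm{im}(\beta) \to 0$. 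Here $\mathrm{im}(\alpha)$ is a quotient of the $\Sigma$-finite module $H^{i-1}_J(C)$, hence is $\Sigma$-finite by Lemma \ref{L3.6}(i) and has finite $\mathrm{Ass}_S$, while $\mathrm{im}(\beta)$ embeds into $H^i_J(\widetilde{S})$. Consequently $\mathrm{Ass}_S H^i_J(S) \subseteq \mathrm{Ass}_S(\mathrm{im}(\alpha)) \cup \mathrm{Ass}_S H^i_J(\widetilde{S})$, and the whole matter is reduced to bounding $\mathrm{Ass}_S H^i_J(\widetilde{S})$.

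For this I would first invoke the independence of the base ring: since $J$ is generated by elements of $S \subseteq \widetilde{S}$, one has $H^i_J(\widetilde{S}) \cong H^i_{J\widetilde{S}}(\widetilde{S})$ as $\widetilde{S}$-modules. As $\widetilde{S}$ is regular of characteristic $p$, the theorem of Huneke--Sharp and Lyubeznik (\cite{HS93,L97}) shows $\mathrm{Ass}_{\widetilde{S}} H^i_{J\widetilde{S}}(\widetilde{S})$ is finite. Finally, since $\widetilde{S}$ is module-finite over $S$, for any $\widetilde{S}$-module $M$ one has $\mathrm{Ass}_S M = \{\mathfrak{P}\cap S \mid \mathfrak{P}\in \mathrm{Ass}_{\widetilde{S}} M\}$, so $\mathrm{Ass}_S H^i_J(\widetilde{S})$ is finite as well. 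Combined with the reduction above, this proves the proposition for every ideal $J$ and every $i$.

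The genuinely substantive input is the characteristic-$p$ finiteness theorem on the regular ring $\widetilde{S}$, together with the passage of associated primes along the finite extension $S \hookrightarrow \widetilde{S}$; the remaining steps are formal manipulations with the finite-length cokernel $C$ and require no case distinction on $J \cap R$. The hardest part, or at least the point demanding the most care, will be the structural claims about the normalization: that excellence forces $\widetilde{R}$ to be module-finite over $R$, and---in the power series case---that $\widetilde{R}\otimes_R R[[x_1,\dots,x_n]] \cong \widetilde{R}[[x_1,\dots,x_n]]$ is again regular, so that the characteristic-$p$ result genuinely applies to $\widetilde{S}$.
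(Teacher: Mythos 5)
Your proposal is correct and follows essentially the same route as the paper: normalize $R$ to a Dedekind domain $T=\widetilde{R}$ (module-finite by excellence), use the short exact sequence $0 \to S \to \widetilde{S} \to C \to 0$ whose cokernel is $\Sigma$-finite of finite length to control $\mathrm{im}(\alpha)$, and then apply the independence theorem together with the Huneke--Sharp/Lyubeznik finiteness theorem to the regular characteristic-$p$ ring $\widetilde{S}$. Your write-up is in fact slightly more explicit than the paper's at the final step, where you spell out that associated primes over $S$ are contractions of associated primes over $\widetilde{S}$.
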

\begin{proof}
  Let $T$ be the integral closure of $R$. We have $T$ is a finitely generated $R$-module. Since $\dim R = 1$ we have $T/R$ is an $R$-module of finite length. Set $V = T \otimes_R S$. Then $V$ is either $T[x_1,...,x_n]$ or $T[[x_1,...,x_n]]$. The short exact sequence
  $$0 \to S \to V \to V/S \to 0$$
  induces the exact sequence
  $$ \cdots \to H^{i-1}_J(V/S) \overset{\alpha}{\to} H^i_J(S) \to H^i_J(V) \to \cdots.$$
Notice that $V/S$ is a $\Sigma$-finite $D$-module of finite length and so is $H^{i-1}_J(V/S)$. Therefore $\mathrm{Ass}_S (\mathrm{im}(\alpha))$ is finite. Since $T$ is Dedekind we have $V$ is a regular ring of characteristic $p>0$. So $\mathrm{Ass}_VH^i_{JV}(V)$ is finite by \cite{HS93} or \cite{L97}. By the independent theorem we have $H^i_J(V) \cong H^i_{JV}(V)$. Thus $\mathrm{Ass}_SH^i_J(V)$ is finite.
The proof is complete.
\end{proof}
The following is the main result of this section.
\begin{proposition} \label{P4.4} Let $R$ be an excellent reduced ring of dimension one and of characteristic $p > 0$. Let $S$ is either $R[x_1, ..., x_n]$ or $R[[x_1, ..., x_n]]$. Then $\mathrm{Ass}_SH^i_J(S)$ is finite for all ideal $J$ and all $i \ge 0$.
\end{proposition}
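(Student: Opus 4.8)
The plan is to adapt the proof of Proposition \ref{P4.3} almost verbatim, replacing the integral closure of the domain $R$ by the normalization of the reduced ring $R$, which is now a finite product of Dedekind domains and fields. Write $\mathfrak p_1, \ldots, \mathfrak p_s$ for the minimal primes of $R$; since $R$ is reduced, $\bigcap_k \mathfrak p_k = 0$ and the total ring of fractions is $Q(R) = \prod_k \mathrm{Frac}(R/\mathfrak p_k)$. Let $T$ be the integral closure of $R$ in $Q(R)$. Then $T = \prod_{k=1}^s T_k$, where $T_k$ is the integral closure of the excellent domain $R/\mathfrak p_k$ in its fraction field. Because $R$ is excellent (hence Nagata) and reduced, $T$ is a module-finite $R$-algebra, and each $T_k$ is a normal domain with $\dim T_k = \dim R/\mathfrak p_k \le 1$; thus each $T_k$ is either a Dedekind domain or a field, and in either case is regular.

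First I would check that $T/R$ has finite length as an $R$-module. It is finitely generated since $T$ is module-finite over $R$, and $R_{\mathfrak p_k} \to T_{\mathfrak p_k}$ is an isomorphism for each minimal prime $\mathfrak p_k$ (both localizations equal $\mathrm{Frac}(R/\mathfrak p_k)$), so $\Supp_R(T/R)$ contains no minimal prime. As the support of a finitely generated module it is closed; containing no generic point of the one-dimensional ring $R$, it is therefore a finite set of (height-one) maximal ideals, and $T/R$ has finite length. Next, set $V = T \otimes_R S$. Since $T$ is module-finite over $R$, $V$ is $T[x_1,\ldots,x_n]$ or $T[[x_1,\ldots,x_n]]$ (using the identity $T \otimes_R R[[x]] \cong T[[x]]$ for finite $T$), and $V = \prod_k V_k$ with $V_k$ a polynomial or power-series ring over the regular ring $T_k$ of characteristic $p$. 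Hence $V$ is a regular Noetherian ring of characteristic $p$.

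Now I would run the long exact sequence argument of Proposition \ref{P4.3}. As $S$ is $R$-flat, tensoring $0 \to R \to T \to T/R \to 0$ with $S$ gives a short exact sequence of $D$-modules $0 \to S \to V \to V/S \to 0$ with $V/S \cong (T/R)\otimes_R S$; since $T/R$ has finite length, $V/S$ is a $\Sigma$-finite $D$-module of finite length by Lemma \ref{L3.3} (see also Example \ref{E2.1}(iv)), and so is each $H^{i-1}_J(V/S)$. The induced sequence
$$\cdots \to H^{i-1}_J(V/S) \overset{\alpha}{\to} H^i_J(S) \to H^i_J(V) \to \cdots$$
yields $\mathrm{Ass}_S H^i_J(S) \subseteq \mathrm{Ass}_S(\mathrm{im}\,\alpha) \cup \mathrm{Ass}_S H^i_J(V)$; the first set is finite because $\mathrm{im}\,\alpha$ has finite length as a $D$-module (a quotient of $H^{i-1}_J(V/S)$, cf.\ Lemma \ref{L3.6}(i)). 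For the second, the regularity of $V$ and $\Char V = p$ give that $\mathrm{Ass}_V H^i_{JV}(V)$ is finite by \cite{HS93, L97}, and the independence theorem identifies $H^i_J(V) \cong H^i_{JV}(V)$, so $\mathrm{Ass}_S H^i_J(V)$ (the contraction of $\mathrm{Ass}_V H^i_{JV}(V)$ to $S$) is finite. Combining the two finiteness statements completes the argument.

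The genuine content, and the only place where the hypotheses are used nontrivially, is the structural step: that the reduced excellent one-dimensional ring $R$ has a module-finite normalization $T$ that splits as a product of regular rings (Dedekind domains or fields) and for which $T/R$ has finite length. The dimension-one and reduced hypotheses are exactly what force $T/R$ to be finite-length supported on finitely many maximal ideals, and excellence is what makes $T$ finite over $R$; note also that, unlike the domain case of Proposition \ref{P4.3}, $R$ need not be equidimensional, so some factors $T_k$ may be fields (zero-dimensional components), which is harmless since fields are regular. Once this is in place, the remainder is formally identical to Proposition \ref{P4.3}, and I anticipate no further obstacle.
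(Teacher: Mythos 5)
Your proof is correct, but it takes a genuinely different route from the paper's. The paper keeps Proposition \ref{P4.3} as a black box for the domain case and then handles the reduced case by first invoking Lemma \ref{L4.2} to reduce to the situation where $\dim R/\frak p = 1$ for every $\frak p \in \mathrm{Ass}_R R$, and then inducting on the number $r$ of minimal primes via the Mayer--Vietoris-type sequence
$$0 \to S \to \big(S/(\frak p_1 \cap \cdots \cap \frak p_{r-1})S\big) \oplus S/\frak p_rS \to S/(\frak p_1 \cap \cdots \cap \frak p_{r-1} + \frak p_r)S \to 0;$$
the key point there is that $\frak p_1 \cap \cdots \cap \frak p_{r-1} + \frak p_r$ sits in no minimal prime, so the ``overlap'' ring has finite length and its local cohomology is $\Sigma$-finite, making its contribution to $\mathrm{Ass}_S$ finite. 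You instead generalize the normalization argument of Proposition \ref{P4.3} itself: you take the integral closure $T$ of the reduced ring $R$ in its total quotient ring, use excellence (Nagata) for module-finiteness, identify $T$ as a finite product of Dedekind domains and fields (hence regular), verify via localization at minimal primes that $T/R$ has finite length, and then run the same long exact sequence and Huneke--Sharp/Lyubeznik input once, with no induction. Your checks of the structural facts (the product decomposition of the normalization, $\Supp_R(T/R)$ avoiding minimal primes hence finite, $T \otimes_R R[[x_1,\ldots,x_n]] \cong T[[x_1,\ldots,x_n]]$ for module-finite $T$, and $\Ass_S M = \{Q \cap S : Q \in \Ass_V M\}$ for the finite extension $S \to V$) are all sound, including the non-equidimensional subtlety where some factors $T_k$ are fields. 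What your approach buys is uniformity and brevity: it subsumes Proposition \ref{P4.3} and dispenses with both Lemma \ref{L4.2} and the induction, at the cost of slightly heavier commutative-algebra input on normalizations of reduced excellent rings; the paper's approach needs normalization only for domains and glues components purely with local cohomology formalism plus $\Sigma$-finiteness.
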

\begin{proof} By Lemma \ref{L4.2} we can assume that $\dim R/\frak p = 1$ for all $\frak p \in \mathrm{Ass}_RR$. Since $R$ is reduced, $0 = \frak p_1 \cap \cdots \cap \frak p_r$. We proceed by induction on $r$. The case $r=1$ follows from Proposition \ref{P4.3}. For $r > 1$, the following exact sequence
$$0 \to S \to (S/(\frak p_1 \cap \cdots \cap \frak p_{r-1})S) \oplus S/\frak p_rS \to S/(\frak p_1 \cap \cdots \cap \frak p_{r-1} + \frak p_r)S \to 0$$
induces the exact sequence
$$\cdots \to H^{i-1}_J(S/(\frak p_1 \cap \cdots \cap \frak p_{r-1} + \frak p_r)S) \overset{\alpha}{\to} H^i_J(S) \to H^i_J(S/(\frak p_1 \cap \cdots \cap \frak p_{r-1})S) \oplus H^i_J(S/\frak p_rS) \to \cdots.$$
Since $\frak p_1 \cap \cdots \cap \frak p_{r-1} + \frak p_r$ is not contained in any minimal prime and $\dim R=1$, we have $R/(\frak p_1 \cap \cdots \cap \frak p_{r-1} + \frak p_r)$ has finite length. Thus
$$H^{i-1}_J(S/(\frak p_1 \cap \cdots \cap \frak p_{r-1} + \frak p_r)S)$$
is $\Sigma$-finite for all $i \ge 1$. Thus $\mathrm{Ass}_S(\mathrm{im}(\alpha))$ is finite. Combining with the inductive hypothesis we obtain the assertion.
\end{proof}
Inspired by \cite{ABL05} and \cite{BBL14} we raise the following question.
\begin{question} Let $R$ be a Noetherian ring of dimension zero and of characteristic $p>0$. Let $S = R[x_1, ..., x_n]$ or $S = R[[x_1, ..., x_n]]$. For each ideal $J = (a_1,...,a_t)$ of $S$, is it true that the image of the canonical map
$$\varphi : H^i(a_1,...,a_t; S) \to H^i_J(S)$$
generates $H^i_J(S)$ as a $D$-module.
\end{question}
If the above question has a positive answer, then by the same method used in \cite{BBL14} we can extend the result of Proposition \ref{P4.4} in the case $S = R[x_1, ..., x_n]$ for any ring of dimension one and of characteristic $p >0$.

\begin{acknowledgement}
The author is grateful to the referee for his/her useful comments.
\end{acknowledgement}

\end{document}